\newcommand{\ve}{\varepsilon}
\def\S{\mathcal S}
\newcommand{\be}{\begin{equation}}
\newcommand{\ee}{\end{equation}}
\newcommand{\ba}{\begin{align}}
\newcommand{\ea}{\end{align}}
\newtheorem{theorem}{Theorem}[section]
\newtheorem{corollary}[theorem]{Corollary}
\newtheorem{lemma}{Lemma}[section]
\def\om{\omega}
\def\beq{\begin{equation}}\def\enq{\end{equation}}
\title{Minimal group determinants for dicyclic groups}
\author[B. Paudel]{Bishnu Paudel}
\address{ Department of Mathematics\\
         Kansas State University\\
         Manhattan, KS 66506, USA}
\email{bpaudel@ksu.edu, pinner@math.ksu.edu}
\author[C. Pinner]{Christopher Pinner}
\keywords{group determinant, dicyclic group, Lind-Lehmer constant, Mahler measure}
\subjclass[2010]{Primary: 11R06, 15B36; Secondary: 11B83, 11C08,  11C20, 11G50, 11R09, 11T22, 43A40}
\date{\today}
\begin{document}

\begin{abstract}
We determine the minimal non-trivial  integer group determinant  for the dicyclic group of order $4n$ when  $n$ is odd.
We also discuss  the set of all  integer group determinants for the dicyclic groups of order $4p$.

\end{abstract}

\maketitle

\section{Introduction}\label{secIntroduction}
For a finite group $G=\{g_1,\ldots ,g_n\}$ of order $n$ we assign a  variable $x_g$ for each element $g\in G$ and define the {\it group determinant}
$\mathscr{D}_G(x_{g_1},\ldots ,x_{g_n})$ to be the determinant of the $n\times n$ matrix whose $ij$th entry is $x_{g_i g_j^{-1}}$.
We are interested here in the values that the group determinant can take when the variables are all integers
$$ \S(G) =\{ \mathscr{D}_G(x_{g_1},\ldots ,x_{g_n})\; : \; x_{g_1},\ldots ,x_{g_n}\in \mathbb Z\}. $$
Notice that $\S(G)$ will be closed under multiplication:
\be \label{mult}  \mathscr{D}_G(a_{g_1},\ldots ,a_{g_n})\mathscr{D}_G(b_{g_1},\ldots ,b_{g_n})=\mathscr{D}_G(c_{g_1},\ldots ,c_{g_n}), \;\;\; c_g=\sum_{uv=g} a_ub_v. \ee

An old problem of Olga Taussky-Todd 
is to determine  $\S (\mathbb Z_n)$, where the group determinants are the $n\times n$ circulant determinants with integer entries. Here and throughout we write $\mathbb Z_n$ for the integers modulo $n$, and $p$ will always denote a prime.

Laquer \cite{Laquer} and Newman \cite{Newman1,Newman2} obtained divisibility conditions on the values of the group determinant
for integer variables for cyclic groups and a complete description  of the values for certain cyclic groups. 
For example,   Laquer \cite{Laquer} and Newman \cite{Newman1} showed that
\be \label{Zp}  \S (\mathbb Z_p)=\{ p^a m,\;\; (m,p)=1,\;\; a=0 \text{ or } a\geq 2\}, \ee
and Laquer \cite{Laquer} that for odd $p$
\be \label{Z2p}  \S (\mathbb Z_{2p})=\{ 2^a p^b m,\;\; (m,2p)=1,\;\; a=0 \text{ or } a\geq 2, \;\; b=0 \text{ or } b\geq 2 \}. \ee
Newman \cite{Newman2} described $\S(\mathbb Z_9)$  with upper and lower set  inclusions for general $\mathbb Z_{p^2}$. 
For the general cyclic group Newman \cite{Newman1} showed that
\be \label{coprime} \{m\in \mathbb Z\; :\; \gcd(m,n)=1\}\subset \S(\mathbb Z_n), \ee
with a divisibility restriction for the values not coprime to the order:
\be \label{abeliandiv}  p^t\parallel n,\; p\mid m\in \S(\mathbb Z_n)\;\; \Rightarrow \;\; p^{t+1}\mid m. \ee
For odd $p$ the values for the Dihedral groups of order $2p$
or $4p$  were obtained in  \cite{dihedral}: 
\begin{align*}
\S( D_{2p}) & =\{ 2^a p^b m\; : \; (m,2p)=1,\; a=0 \text{ or } a\geq 2, \; b=0 \text{ or }  b\geq 3 \}, \\
\S (D_{4p}) & = \{m\equiv 1 \text{ mod } 4 \; : \; p\nmid m \text{ or } p^3\mid m\}  \\
  & \hspace{5ex} \cup \{ 2^a p^b m\; : \; (m,2p)=1,\; a=4 \text{ or } a\geq 6, \; b=0 \text{ or }  b\geq 3 \}, 
\end{align*}
with a counterpart to \eqref{coprime}
\be \label{coprimedihedral} \{m\in \mathbb Z\; :\; \gcd(m,2n)=1\}\subset \S(D_{2n}), \ee
for $n$ odd, but only those  $\gcd(m,2n)=1$ with $m\equiv 1$ mod $4$ when $n$ is even, 
and the divisibility  condition \eqref{abeliandiv}
\be \label{dihedraldiv}  p^t\parallel n,\; p\mid m\in \S( D_{2n})\;\; \Rightarrow \;\; p^{2t+1}\mid m, \ee
for odd $p$, with $2^2,2^4$ or $2^{2t+4}\mid m$ when $p=2$ and $t=0,1$ or $t\geq 2$ respectively.

A complete description for all groups of order at most 14 was found in \cite{smallgps} and for $S_4$ in \cite{S4}.
 For example for the two dicyclic groups of order less than 14:
\be \label{Q8} 
\S(Q_8)  = \{ 8m+1, \; (8m-3)p^2, \text{ and } 2^8 m\; : \; m \in \mathbb Z, \; p\equiv 3 \text{ mod } 4 \} 
\ee
and
\begin{align}\label{Q12} \S(Q_{12}) & = \{2^a 3^b m\;:\; a=0, \; 4 \text{ or } a\geq 6, b=0 \text{ or } b\geq 3, \; \gcd(m,6)=1\}  \\
      &  \;\;\;  \;\; \cup   \{2^5 3^b m\;:\; b=4 \text{ or } b\geq 6, \; \gcd(m,6)=1\}   \nonumber \\
  & \;\;\;\;\; \cup   \{2^5 3^b mp\;:\; b=0, \; 3  \text{ or } 5, \; \gcd(m,6)=1, p\equiv 5 \text{ mod } 12 \}  \nonumber \\
 &\;\;\;\;\;   \cup   \{2^5 3^b m p^2\;:\; b=0, \; 3  \text{ or } 5, \; \gcd(m,6)=1, \; p\equiv 5 \text{ mod }6\}.\nonumber
\end{align}
The complexity encountered even for small groups  \cite{smallgps} makes it clear that obtaining $\S (G)$ is not in general feasible.
Indeed simply finding  the smallest non-trivial integer determinant  
\be \label{deflambda} \lambda (G) := \min \{ |\mathscr{D}_G(x_{g_1},\ldots ,x_{g_n})| \geq 2\; : \; x_{g_i}\in \mathbb Z\} \ee
can be difficult. For a group of order $n$ taking $x_e=0$ and $x_g=1$ for $g\neq e$ always gives determinant  $(-1)^{n-1}(n-1)$, so we have as our trivial bound
\be \label{trivialbd} \lambda(G)\leq |G|-1 \ee
for $|G|\geq 3$, with $\lambda(\{e\})=2$, $\lambda(\mathbb Z_2)=3$.

Kaiblinger  \cite{Norbert}  obtained $\lambda (\mathbb Z_n)$ when $420\nmid n$, with this extended to $2^3\cdot 3\cdot 5\cdot 7\cdot 11\cdot 13\cdot 17\cdot 19\cdot 23  \nmid n$
in \cite{Pigno1}. Values of $\lambda(G)$ for non-cyclic abelian $G$  are considered in \cite{dilum,pgroups,Cid2, Stian,2gp}.
In \cite{dihedral} the value of $\lambda (D_{2n})$ was obtained for any dihedral group of order $2n$ with $2^2\cdot 3^2\cdot 5\cdot 7\cdot 11\cdot 13 \cdot  \ldots \cdot 107\cdot 109\cdot 113 \nmid n$. Our goal here is to determine similar results for $Q_{4n}$, the dicyclic group of order $4n$, when $n$ is odd.

\section{Lind Mahler Measure} 
For a polynomial $F\in \mathbb Z[x,x^{-1}]$  one defines the traditional logarithmic  Mahler measure by
\be \label{MM} m(F)=\int_0^1 \log |F(e^{2\pi  i \theta})| d\theta. \ee
Lind \cite{Lind} regarded this as a measure on the group $\mathbb R/\mathbb Z$ and extended the concept to a compact abelian group with a Haar measure. For example for an $F\in \mathbb Z[x,x^{-1}]$ and cyclic group $\mathbb Z_n$ we can
define a $\mathbb Z_n$-logarithmic measure
$$  m_{\mathbb Z_n}(F)  = \frac{1}{n} \sum_{z^n=1} \log |F(z)|  $$
That is $m_{\mathbb Z_n}(F)= \frac{1}{n} \log |M_{\mathbb Z_n}(F)| $
where 
$$ M_{\mathbb Z_n} (F):= \prod_{j=0}^{n-1} F(w_n^j),\;\;\; w_n:=e^{2\pi i/n}. $$
More generally for a finite abelian group
\be \label{genab} G=\mathbb Z_{n_1} \times \cdots \times \mathbb Z_{n_k} \ee
we can define a logarithmic $G$-measure on $\mathbb Z [ x_1,\ldots ,x_k]$ by
$$ m_G(F)=\frac{1}{|G|} \log |M_G(F)|,\;\;\; M_G(F) =\prod_{j_1=0}^{n_1-1} \cdots \prod_{j_k=0}^{n_k-1} F\left(w_{n_1}^{j_1},\cdots ,w_{n_k}^{j_k}\right). $$
As by observed by Dedekind the group determinant for a finite abelian group can be factored into linear factors using the group
characters $\hat{G}$
\be \label{abelianfactor} \mathscr{D}_G(x_{g_1},\ldots ,x_{g_n})=\prod_{\chi \in \hat{G}} \left(\chi(g_1)x_{g_1}+\cdots + \chi(g_n)x_{g_n}\right), \ee
and can be related directly to a  Lind Mahler  measure for the group, see for example \cite{Cid2}. For example in the cyclic case, see \cite{Norbert2}
$$ \mathscr{D}_{\mathbb Z_n}(a_0,a_1,\ldots ,a_{n-1}) = M_{\mathbb Z_n}(a_0+a_1x+\cdots + a_{n-1}x^{n-1}), $$
and in the general finite abelian case \eqref{genab}
$$ \mathscr{D}_{G}(a_{g_1},\ldots ,a_{g_n}) = M_{G}\left( \sum_{g=(j_1,\ldots ,j_k)\in G} a_g x_1^{j_1}\cdots x_k^{j_k}\right). $$

For a finite non-abelian group the group determinant will not factor into linear factors but can still be factored using the group representations $\hat{G}$
$$ \mathscr{D}_G(x_{g_1},\ldots ,x_{g_n})=\prod_{\rho\in \hat{G}} \det\left( \sum_{g\in G} x_g \rho(g)\right)^{\deg(\rho)} $$
 as discovered by Frobenius, see for example \cite{Formanek,Conrad}. In \cite{dihedral} it was shown that the group determinants 
for the dihedral group of order $2n$,
$$ D_{2n}=\langle x,y\; :\; x^n=1,y^2=1, xy=yx^{-1}\rangle =\{1,x,\cdots ,x^{n-1}, y,yx,\ldots ,yx^{n-1}\}, $$
can be written as a $\mathbb Z_n$-measure
\be  \label{dih} \mathscr{D}_{D_{2n}} (a_0,\ldots ,a_{n-1},b_0,\ldots, b_{n-1})=M_{\mathbb Z_n} \left(f(x)f(x^{-1})-g(x)g(x^{-1})\right), \ee
where 
\be \label{deffg} f(x)=a_0+\cdots +a_{n-1}x^{n-1}, \;\;\; g(x)=b_0+\cdots +b_{n-1}x^{n-1}.\ee
Similarly for the dicyclic group of order $4n$,
$$ Q_{4n}=\langle x,y\; :\; x^{2n}=1,y^2=x^n, xy=yx^{-1}\rangle =\{1,x,\cdots ,x^{2n-1}, y,yx,\ldots ,yx^{2n-1}\}, $$
it was shown in \cite{smallgps} that the group representations give
\be  \label{dic} \mathscr{D}_{Q_{4n}} (a_0,\ldots ,a_{2n-1},b_0,\ldots, b_{2n-1})=M_{\mathbb Z_{2n}} \left(f(x)f(x^{-1})-x^n g(x)g(x^{-1})\right),\ee
where
\be \label{deffgg} f(x)=a_0+\cdots +a_{2n-1}x^{2n-1}, \;\;\; g(x)=b_0+\cdots +b_{2n-1}x^{2n-1}.\ee
Notice that we can conversely use the group determinant  to define a Lind style polynomial measure for non-abelian finite groups. For example 
we can define $D_{2n}$ and $Q_{4n}$ measures on $\mathbb Z[x,y]$
by 
\begin{align}
M_{D_{2n}}(f(x)+yg(x))&=M_{\mathbb Z_n} \left(f(x)f(x^{-1})-g(x)g(x^{-1})\right), \nonumber \\
\label{formulaQ} M_{Q_{4n}}(f(x)+yg(x))&=M_{\mathbb Z_{2n}} \left(f(x)f(x^{-1})-x^n g(x)g(x^{-1})\right), 
\end{align}
  although here the polynomial ring is no longer commutative, the monomials having to satisfy the group relations $y^2=1$, $xy=yx^{-1}$ etc., the relations allowing us to reduce any  $F(x,y)$ to the form $f(x)+yg(x)$, with $f$ and $g$ of the form 
\eqref{deffg} or \eqref{deffgg} if we want, and to multiply and reduce two polynomials.

The classical Lehmer problem \cite{Lehmer} is to determine $\inf\{  m(F)>0\: :\: F\in\mathbb Z [x]\}$. Given the correspondence between the Lind measures and group determinants in the abelian case  we can  regard determining 
$\lambda (G)$ for a finite group as the Lind-Lehmer problem for that group.
An alternative way of extending the Mahler measure to groups can be found in \cite{Lalin}.

\section{Minimal determinants for odd $n$}

For the dicyclic groups $G=Q_{4n}$ we have some extra properties when $n$ is odd. For example, since
 \be \label{minus} M_G(f(x)+yg(x))=(-1)^n M_G(g(x)+yf(x)), \ee
if $n$ is odd we have $-m\in \S (G)$ whenever $m \in \S(G)$. This is certainly not true when $n$ is even
as we saw  for $Q_8$. When $n$ is odd we also have 
$$M_G\left( 1+(x^n+1)(x+\cdots +x^{(n-1)/2})+y(x^n+1)(x+\cdots +x^{(n-1)/2})  \right)=   2n-1, $$
always improving on the trivial bound \eqref{trivialbd}, and 
\be \label{16} M_G(x^2+1)=16, \ee
giving us an absolute bound $\lambda(Q_{4n})\leq 16$ for $n$ odd. In the next section
we will see that an analog to  \eqref{coprime} and \eqref{coprimedihedral} holds for $n$ odd:
\be \label{coprimedicyclic} \{m\in \mathbb Z\; :\; \gcd(m,2n)=1\}\subset \S(Q_{4n}), \ee
and, corresponding to the divisibility conditions \eqref{abeliandiv} and \eqref{dihedraldiv},
\be \label{dicyclicdivproperty} 2\mid m\in \S(Q_{4n})\; \Rightarrow 16\mid m,\;\;\; p^t\parallel n,\; p\mid m\in \S(Q_{4n})\; \Rightarrow\; p^{2t+1}\mid m. \ee

Properties \eqref{16},\eqref{coprimedicyclic},\eqref{dicyclicdivproperty}, are  enough for us to completely  determine $\lambda (Q_{4n})$:

\begin{theorem}\label{odd}  If $n$ is odd then
$$ \lambda (Q_{4n}) =\min\{ 16, p_0\} $$
where $p_0$ is the smallest prime not dividing $2n$.
That is,
$$ \lambda(Q_{4n}) = \begin{cases} 3 & \text{ if $3\nmid n$, } \\
 5  & \text{ if $3\mid n,$ $5\nmid n$, } \\
 7 & \text{ if $3\cdot 5 \mid n,$ $7\nmid n$, } \\
 11 & \text{ if $3\cdot 5\cdot 7\mid n,$ $11 \nmid n$, } \\
 13 & \text{ if $3\cdot 5\cdot 7\cdot 11\mid n,$ $13 \nmid n$, } \\
 16 & \text{ if $3\cdot 5\cdot 7 \cdot 11\cdot 13 \mid n$. } \end{cases}
$$
\end{theorem}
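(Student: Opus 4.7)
The three ingredients already highlighted immediately before the statement---the explicit evaluation $M_G(x^2+1)=16$ in \eqref{16}, the coprime-values inclusion \eqref{coprimedicyclic}, and the divisibility constraint \eqref{dicyclicdivproperty}---are exactly what the argument needs, and no further structural facts about $Q_{4n}$ come in. The plan is a clean two-sided bound followed by a mechanical case table.

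For the upper bound I would combine the two elementary sources of small determinants. Equation \eqref{16} puts $16$ into $\S(Q_{4n})$. Equation \eqref{coprimedicyclic} puts every integer coprime to $2n$ into $\S(Q_{4n})$, and the smallest such integer of absolute value at least $2$ is precisely $p_0$: any integer $m\geq 2$ coprime to $2n$ has every prime factor coprime to $2n$, so $m\geq p_0$. Hence $\lambda(Q_{4n})\leq \min\{16,p_0\}$.

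For the matching lower bound I would take any $m\in\S(Q_{4n})$ with $|m|\geq 2$ and split on $\gcd(m,2n)$. If $\gcd(m,2n)=1$, the argument of the previous paragraph gives $|m|\geq p_0$. Otherwise some prime dividing $2n$ also divides $m$: if $2\mid m$, then \eqref{dicyclicdivproperty} forces $16\mid m$, so $|m|\geq 16$; if an odd prime $p$ with $p^{t}\parallel n$ (necessarily $t\geq 1$ and $p\geq 3$) divides $m$, then \eqref{dicyclicdivproperty} forces $p^{2t+1}\mid m$, so $|m|\geq p^{3}\geq 27>16$. In every case $|m|\geq \min\{16,p_0\}$, matching the upper bound and giving $\lambda(Q_{4n})=\min\{16,p_0\}$.

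The explicit case table then reduces to reading off $p_0$ as the first missing prime in the list $3,5,7,11,13$ of divisors of $n$; once $3\cdot 5\cdot 7\cdot 11\cdot 13\mid n$ we have $p_0\geq 17$, so the minimum collapses to $16$. Thus, granted \eqref{16}, \eqref{coprimedicyclic} and \eqref{dicyclicdivproperty}, the theorem itself is essentially a corollary; the main obstacle sits one level down, namely proving those three properties from the measure formula \eqref{formulaQ}. I would expect the $2^4$ divisibility at $p=2$ and the $p^{2t+1}$ divisibility at odd $p\mid n$ to be the technical heart of that supporting work, presumably handled by choosing appropriate primes above $2$ and $p$ in a splitting field of $x^{2n}-1$ and tracking $\pi$-adic valuations through the factorization $f(x)f(x^{-1})-x^{n}g(x)g(x^{-1})$.
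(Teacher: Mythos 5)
Your proposal is correct and follows essentially the same route as the paper: the authors likewise obtain the upper bound from the achievability of $16$ (Lemma \ref{dicyclicdiv}) and of the smallest prime $p_0\nmid 2n$ (Lemma \ref{achieve}), and the matching lower bound from the divisibility constraints that an even determinant is divisible by $16$ and a determinant divisible by a prime $p\mid n$ is divisible by $p^3\geq 27$. Your closing observation is also accurate: the real work lies in establishing \eqref{16}, \eqref{coprimedicyclic} and \eqref{dicyclicdivproperty}, which the paper does in Lemmas \ref{dicyclicdiv} and \ref{achieve}.
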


A complete description of the determinants for $D_{2p}$ and $D_{4p}$ was given in \cite{dihedral}. As we saw for $Q_{12}$ in \eqref{Q12} 
the determinants  for  $Q_{4p}$ must depend more subtly on $p$, or at least those determinants $M$ with $2^5\parallel M.$  We can 
be precise about the other values.
 
\begin{theorem} \label{Q4p}
Suppose that $p$ is an odd prime. 
The determinants for $Q_{4p}$ will take the form $2^kp^{\ell}m$, $\gcd(m,2p)=1$, with
$k=0$ or $k\geq 4$ and $\ell=0$ or $\ell\geq 3$.

We can achieve all such values with $k=0$, $k=4$ or $k\geq 6$, and  all  with $k=5$ and $\ell=4$ or $\ell\geq 6$.

This just leaves  $2^5 m$, $2^5 p^3m$, $2^5 p^5m$, $\gcd(m,2p)=1$.  Not all $m$ are possible.
 
\noindent
The smallest determinant of the form  $2^5|m|$, $\gcd(m,2p)=1$ has $m=\frac{1}{2}(p^2+1)$.

\noindent
If $p\equiv 3$ mod 4  the smallest $2^5 p^3|m|$, $2^5  p^5|m|$, $\gcd(m,2p)=1$  have   $m= \frac{1}{2}(p^2+1)$.

\noindent
If $p\equiv 1$ mod 4 then all the multiples of $2^5p^5$ are determinants. For $p=5$  all multiples of  $2^5p^3$  are determinants. 
\end{theorem}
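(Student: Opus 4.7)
The divisibility in the first sentence is immediate from \eqref{dicyclicdivproperty} with $t=1$: $p\mid m$ forces $p^3\mid m$, while $2\mid m$ forces $2^4\mid m$. For the realizability claims I would combine the multiplicativity \eqref{mult} with a small library of explicit polynomial constructions. By \eqref{coprimedicyclic} every $m$ coprime to $2p$ occurs with $k=\ell=0$; multiplying by $M_{Q_{4p}}(x^2+1)=16$ from \eqref{16} then gives $k=4$. Pairing $(x^2+1)$ with further factors whose measures have $2$-adic valuation $\geq 2$ (together with coprime adjustments) covers all $k\geq 6$. For $\ell\geq 3$ I would exhibit a polynomial whose measure is an odd multiple of exactly $p^3$, built by inserting a cyclotomic-type factor into $f$ or $g$ in \eqref{formulaQ}. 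A similarly dedicated construction, tuned to produce exactly $2^5$ (rather than $2^4$ or $2^6$), handles $k=5$ with $\ell=4$ or $\ell\geq 6$.

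The heart of the proof, and the main technical obstacle, is pinning down the minimum of $|m|$ for $k=5$ with $\ell\in\{0,3,5\}$. I would analyze the factorization of $M_{\mathbb Z_{2p}}(F)$ for $F=f(x)f(x^{-1})-x^p g(x)g(x^{-1})$ via its values at $x=1$, $x=-1$, the primitive $p$-th roots, and the primitive $2p$-th roots: using that $p$ is odd these contribute $f(1)^2-g(1)^2$, $f(-1)^2+g(-1)^2$, $\prod_{k=1}^{p-1}(|f(w_p^k)|^2-|g(w_p^k)|^2)$, and $\prod_{\gcd(j,2p)=1}(|f(w_{2p}^j)|^2+|g(w_{2p}^j)|^2)$ respectively. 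The last two products are Galois-invariant, hence rational integers expressible as norms in $\mathbb Z[w_{2p}]$. The hypotheses $v_2(M)=5$ and $\gcd(m,2p)=1$ impose parities on $f(\pm 1),g(\pm 1)$ and residues of $(f,g)\bmod 2$; combined with the positivity of the sum-of-squares piece at the primitive $2p$-th roots, these constraints force the minimum odd coprime cofactor to be $\tfrac12(p^2+1)$, matched by an explicit small $(f,g)$ found by searching polynomials of minimal support.

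The extra claims for $p\equiv 1\pmod 4$ and for $p=5$ fall out of the same framework. When $-1$ is a square mod $p$ the primitive $p$-th root product splits favourably and one can absorb an arbitrary $p$-coprime integer, realizing every multiple of $2^5p^5$; for $p=5$ an analogous argument at $\ell=3$ succeeds because $\mathbb Z[w_{10}]$ is small enough to handle directly. The principal challenge throughout is the lower bound in the $k=5$ case: determining the precise minimum of a sum-of-squares-style norm in $\mathbb Z[w_{2p}]$ subject to these parity and coprimality constraints is where the main combinatorial-algebraic work lies.
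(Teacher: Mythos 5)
Your framework coincides with the paper's: the divisibility statement from Lemma \ref{dicyclicdiv}, reduction via \eqref{mult} and Lemma \ref{achieve} to achieving the prime powers $2^kp^{\ell}$ by explicit $(f,g)$, and, for the $2^5$ cases, the decomposition $M=M_1M_2M_pM_{2p}$ according to the divisor classes of $2p$. But the decisive step is missing. You write that the parity and coprimality constraints ``force the minimum odd coprime cofactor to be $\tfrac12(p^2+1)$'' and defer the derivation as ``the main combinatorial-algebraic work''; that derivation is precisely the content of the theorem, and nothing in your plan produces it. The actual chain is short but not automatic: (i) $M_p$ and $M_{2p}$ are perfect squares (each conjugate class is traversed twice), so $2^5\parallel M$ forces all five factors of $2$ into $M_1M_2$ and forces $M_{2p}$ odd; (ii) the congruence $M_{2p}\equiv M_2^{\,p-1}\equiv 1\bmod p$ plus the fact that $M_{2p}$ is an odd square dividing $m$ forces $M_{2p}=1$, since otherwise $m$ acquires a square factor $\geq(2p-1)^2$; (iii) the inequality $M_{2p}\geq\prod_j|f(-\omega^j)|^2+\prod_j|g(-\omega^j)|^2$ then forces $g(-\omega)=0$, i.e.\ $\Phi_p(-x)\mid g(x)$, whence $p\mid g(-1)$; (iv) either $g(-1)\neq 0$, so $M_2=f(-1)^2+g(-1)^2\geq p^2+1$ and its odd part is $\geq\tfrac12(p^2+1)$, or $g(-1)=0$, in which case $M_2=f(-1)^2$ contributes an even power of $2$ while $f(1),g(1)$ both even forces $2^2\parallel M_1$ or $2^4\mid M_1$, contradicting $2^5\parallel M_1M_2$. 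Without steps (ii)--(iv) the claimed minimum is unsupported; in particular your appeal to ``positivity of the sum-of-squares piece'' does not by itself rule out small values of $m$, and your suggestion to find the extremal $(f,g)$ by ``searching polynomials of minimal support'' establishes only the upper bound, not the matching lower bound.

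A secondary gap: for $p\equiv 1\bmod 4$ you assert that the primitive $p$-th root product ``splits favourably'' so that every multiple of $2^5p^5$ is realized, but the paper's mechanism is different and concrete: one writes $2p=A^2+B^2$ and exhibits $f(x)=(1+x)+A(x^p-1)\Phi_p(x^2)$, $g(x)=B(x^p-1)\Phi_p(x^2)$ with $M=2^5p^5$, after which all multiples follow from \eqref{mult} and Lemma \ref{achieve}. The case $p=5$ at $\ell=3$ likewise rests on a single explicit pair, not on the ring $\mathbb Z[w_{10}]$ being ``small enough to handle directly.'' These constructions need to be produced, not presumed.
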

For the $p\equiv 1$ mod 4 with $p>5$ it remains unclear whether we achieve any $2^5p^3m$, $\gcd(m,2p)=1$, with $|m|$ 
smaller than $m=\frac{1}{2}(p^2+1)$.

\section{The case of even $n$}\label{evenn}

When $G=Q_{4n}$ with $n$ even it is not at all obvious which values coprime to $2n$ are determinants;  \eqref{coprimedicyclic} is far from true, the odd determinants must be  $1$ mod $4$ with only some of those obtainable.
 The observation that when $g=0$ we have 
\be \label{y=0} M_{Q_{4n}}(f(x))=M_{\mathbb Z_{2n}}(f(x))^2, \ee
does give us 
$$\{m^2 \; : \; \gcd(m,2n)=1\}\subset \{m^2 \; : \; m\in \S(\mathbb Z_{2n})\}\subset \S (Q_{4n}), $$
where, writing $\Phi_{\ell}(x)$ for the $\ell$th cyclotomic polynomial, 
$$ \gcd(m,2n)=1 \; \Rightarrow \; M_G\left( \prod_{p^{\alpha}\parallel m} \phi_p(x)^{\alpha}\right)=m^2. $$
As a counterpart to \eqref{16} we do have 
\be \label{crude2}  2^{t} \parallel n \; \Rightarrow \; M_{G}(x^{2^{t+1}}+1)=2^{2^{t+2}}. \ee
In particular we always have
\be \label{minp} \lambda(Q_{4n})\leq \min\left\{ 2^{2^{t+2}}, \;p_0^2 \right\},\ee
where $p_0$ is the smallest prime not dividing $2n$. 
With our divisibility conditions and Lemma \ref{neven} we can certainly come up with cases of equality in \eqref{minp},
though not always, for example $\lambda(Q_8)=7$.

In a future paper we hope to consider the case of $Q_{4n}$, $2\parallel n$.
The general case of even $n$ seems far out of reach; for $t=1$ we know that \eqref{crude2} does give the smallest even determinant, but for $t\geq 2$ this is not at all clear, indeed the counterpart for cyclic groups remains unresolved.

\section{Divisibility restrictions and values achieved}

We work with the dicyclic measures of polynomials $F=f(x)+yg(x)$,
$$M_G(F)=M_{\mathbb Z_{2n}}\left(f(x)f(x^{-1}) -x^n g(x)g(x^{-1})\right), \;\;\; f,g \in \mathbb Z [x],$$
where if the degree of $f$ or $g$ exceeds $2n-1$ we can still recover a group determinant
by reducing the polynomial mod $x^{2n}-1$ to the form \eqref{deffgg}.

For the dicyclic determinants  we obtain  a divisibility Lemma very much like that obtained for the cyclic groups 
and the dihedral groups \cite[Lemma 4.4]{dihedral}. We begin by observing that the cyclic results \cite[Theorem 2]{Newman1} and  \cite[Theorem 5.8]{Norbert2} are in fact best possible:

\begin{lemma}\label{cycliclemma} 
Suppose that $p^{\alpha}\parallel n$ then
\be \label{cyclicdiv} p\mid M_{\mathbb Z_n}(F(x)) \Rightarrow p^{\alpha+1}\mid M_{\mathbb Z_n}(F(x)) . \ee
Since 
$$ M_{\mathbb Z_n}\left( x-1 + \frac{x^n-1}{x-1}\right)=n^2, $$
and for odd $p$
$$  p^{\alpha+1} \parallel M_{\mathbb Z_n}\left( p+(x-1)\right), $$
this is sharp for $\alpha \geq 1$ when $p$ is odd and for  $\alpha =1$ when $p=2$. 

For $p=2$ and $\alpha \geq 2$ we have
\be \label{cyclicdiv} 2\mid M_{\mathbb Z_n}(F(x)) \Rightarrow 2^{\alpha+2}\mid M_{\mathbb Z_n}(F(x)) . \ee
Since
\be \label{2example}  2^{\alpha+2}\parallel M_{\mathbb Z_n}\left( 4+ (x-1)\right), \ee
this exponent is again sharp.
\end{lemma}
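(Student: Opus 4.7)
The plan is to use the factorization
\[
M_{\mathbb{Z}_n}(F) = \prod_{d \mid n} \Res(\Phi_d, F) = \prod_{d \mid n} N_{\mathbb{Q}(\zeta_d)/\mathbb{Q}}(F(\zeta_d))
\]
and track $p$-adic valuations factor by factor. Write $n = p^{\alpha} m$ with $(p,m) = 1$, so every divisor of $n$ has the form $d = p^{\beta} d'$ with $0 \leq \beta \leq \alpha$ and $d' \mid m$, and we may choose $\zeta_d = \zeta_{d'} \zeta_{p^{\beta}}$. For any prime $\mathfrak{P}$ of $\mathbb{Z}[\zeta_d]$ above $p$, the element $\zeta_{p^{\beta}} - 1$ lies in $\mathfrak{P}$ (it generates the ramified prime of $\mathbb{Z}[\zeta_{p^{\beta}}]$ over $p$), giving the workhorse congruence
\[
F(\zeta_{p^{\beta} d'}) \equiv F(\zeta_{d'}) \pmod{\mathfrak{P}}.
\]

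If $p \mid M_{\mathbb{Z}_n}(F)$, then some $\Res(\Phi_d, F)$ is divisible by $p$, hence $F(\zeta_d) \in \mathfrak{P}$ for some prime $\mathfrak{P}$; the congruence pushes this down to a fixed $d' \mid m$ and prime $\mathfrak{p}$ above $p$ in $\mathbb{Z}[\zeta_{d'}]$ with $v_{\mathfrak{p}}(F(\zeta_{d'})) \geq 1$. Pushing back up, every $\beta \in \{0, 1, \ldots, \alpha\}$ yields a prime $\mathfrak{P}_{\beta}$ of $\mathbb{Z}[\zeta_{p^{\beta} d'}]$ with $v_{\mathfrak{P}_{\beta}}(F(\zeta_{p^{\beta} d'})) \geq 1$, and the norm formula $v_p(N_{K/\mathbb{Q}}(\gamma)) = \sum_{\mathfrak{P} \mid p} f(\mathfrak{P}/p) v_{\mathfrak{P}}(\gamma)$ gives $v_p(\Res(\Phi_{p^{\beta} d'}, F)) \geq f$, where $f$ is the residue degree of $p$ in $\mathbb{Z}[\zeta_{d'}]$. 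Summing the $\alpha + 1$ contributions shows $v_p(M_{\mathbb{Z}_n}(F)) \geq (\alpha + 1) f \geq \alpha + 1$, proving the first claim.

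For the $p = 2$, $\alpha \geq 2$ strengthening, if the bad $d' > 1$ then $f = \mathrm{ord}_{d'}(2) \geq 2$ and the above already gives $2(\alpha + 1) \geq \alpha + 2$. Otherwise $d' = 1$, $F(1)$ is even, and since $F(-1) - F(1) = -2 \sum_{i\text{ odd}} c_i$ is even, $F(-1)$ is even too, with $F(1) F(-1) \equiv F(1)^2 \pmod 4$. Either (a) at least one of $v_2(F(1)), v_2(F(-1))$ is $\geq 2$, so $v_2(F(1) F(-1)) \geq 3$ and the total is $\geq 3 + (\alpha - 1) = \alpha + 2$ after adding $v_2(\Res(\Phi_{2^{\beta}}, F)) \geq 1$ for $\beta = 2, \ldots, \alpha$; or (b) $v_2(F(1)) = v_2(F(-1)) = 1$, in which case $\sum c_{2i} = \tfrac{1}{2}(F(1) + F(-1))$ and $\sum c_{2i+1} = \tfrac{1}{2}(F(1) - F(-1))$ are halves of sums of two odd numbers and hence both even, making the real and imaginary parts of $F(i) = (c_0 - c_2 + \cdots) + i(c_1 - c_3 + \cdots)$ both even. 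Then $F(i) \in 2\mathbb{Z}[i] = (1+i)^2 \mathbb{Z}[i]$, which forces $v_2(\Res(\Phi_4, F)) = v_{(1+i)}(F(i)) \geq 2$ and delivers a total of $\geq 2 + 2 + (\alpha - 2) = \alpha + 2$.

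For sharpness, $\tfrac{x^n - 1}{x - 1}$ vanishes at every non-identity $n$-th root of unity and equals $n$ at $x = 1$, so $M_{\mathbb{Z}_n}(x - 1 + \tfrac{x^n - 1}{x - 1}) = n \cdot \prod_{j=1}^{n-1}(\zeta_n^j - 1) = \pm n^2$ using $\prod_{j=1}^{n-1}(1 - \zeta_n^j) = n$. For the other two examples, the identity $\prod_{j=0}^{n-1}(t + \zeta_n^j) = t^n - (-1)^n$ gives $M_{\mathbb{Z}_n}(p + (x - 1)) = (p-1)^n - (-1)^n$ and $M_{\mathbb{Z}_n}(4 + (x - 1)) = 3^n - 1$ (with $n$ even since $\alpha \geq 2$), and Lifting the Exponent computes $v_p((p-1)^n - (-1)^n) = \alpha + 1$ for odd $p$ and $v_2(3^n - 1) = v_2(2) + v_2(4) + v_2(n) - 1 = \alpha + 2$. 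The main obstacle is the $p = 2$, $\alpha \geq 2$ case: the generic cyclotomic count falls short by exactly one factor of $2$ in the corner sub-case $d' = 1$ with $v_2(F(\pm 1)) = 1$, and closing that gap requires the ad hoc observation that $F(i)$ then lands in the square of the ramified prime of $\mathbb{Z}[i]$.
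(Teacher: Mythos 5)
Your proposal is correct, and its first half — the factorization $M_{\mathbb Z_n}(F)=\prod_{d\mid n}\Res(\Phi_d,F)$ together with the congruence $F(\zeta_{p^{\beta}d'})\equiv F(\zeta_{d'})$ modulo a prime above $p$, summed over $\beta=0,\dots,\alpha$ — is exactly the paper's argument for $p^{\alpha+1}\mid M_{\mathbb Z_n}(F)$.

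Where you genuinely diverge is the $p=2$, $\alpha\ge 2$ strengthening. The paper packages the three factors $U_{d'}(f)U_{2d'}(f)U_{4d'}(f)$ as $M_{\mathbb Z_4}\bigl(\prod_{(r,d')=1}f(w_{d'}^rx)\bigr)$, a $\mathbb Z_4$-measure of an integer polynomial, and then quotes Kaiblinger's classification $\S(\mathbb Z_4)=\{2^ac:\ a=0\text{ or }a\ge 4\}$ to conclude $2^4$ divides that block; the remaining $2^{\alpha-2}$ comes from $d=2^3d',\dots,2^{\alpha+1}d'$ as before. You instead argue directly: for $d'>1$ the residue degree $\mathrm{ord}_{d'}(2)\ge 2$ already doubles every contribution, and for $d'=1$ you split on $v_2(F(\pm1))$, the delicate subcase being $v_2(F(1))=v_2(F(-1))=1$, where the parity of the even- and odd-indexed coefficient sums forces $F(i)\in 2\mathbb Z[i]$ and hence $v_2(\Res(\Phi_4,F))\ge 2$. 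Your route is self-contained (no appeal to the known structure of $\S(\mathbb Z_4)$) at the cost of a case analysis; the paper's is shorter and treats all $d'$ uniformly via the norm-polynomial trick, at the cost of importing an external result. Your verification of the sharpness examples via the closed form $\prod_{j}(t+\zeta_n^j)=t^n-(-1)^n$ and lifting-the-exponent is also cleaner than the paper's term-by-term resultant computation, and your $\pm n^2$ (sign $(-1)^{n-1}$) is the more careful statement of the first example. One small point worth making explicit in a final write-up: the residue degree of $2$ in $\mathbb Q(\zeta_{2^{\beta}d'})$ equals $\mathrm{ord}_{d'}(2)$ because $2$ is totally ramified in $\mathbb Q(\zeta_{2^{\beta}})$ and unramified in $\mathbb Q(\zeta_{d'})$; you use this silently when you claim $v_2(\Res(\Phi_{2^{\beta}d'},F))\ge f$ for every $\beta$.
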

Although the exponent is sharp we do not necessarily get that prime power itself (let alone all multiples); for example Newman \cite{Newman2} showed that $p^3$ is in $\S(\mathbb Z_{p^2})$ for $p=3$ but not for any  $p\geq 5$.

For the dicyclic groups we have:

\begin{lemma} \label{dicyclicdiv}
Suppose that $G=Q_{4n}$.

(i) For odd  $p,$ if $p^{\alpha}\parallel n$ and $p\mid M_G(F)$ then $p^{2\alpha+1}\mid M_G(F)$. 

This is best possible, for example
$$ p^{2\alpha +1} \parallel M_G\left( 1-(1+x^n)(1-x)+ y\left(\frac{p-1}{2}\right) (1+x^n)\right). $$

(ii) Suppose that $2^{\alpha}|| n$ and $2\mid M_{G}(F)$.

(a) If $\alpha=0$ then $2^4\mid M_G(F)$.

(b) If $\alpha \geq 1$ then  $2^{2\alpha+6}\mid M_G(F)$.

Since 
$$M_{G}\left(x^{2^{\alpha+1}}+1 + m\frac{x^{2n}-1}{x-1} + y m\frac{x^{2n}-1}{x-1}\right)=2^{2^{\alpha+2}}(1+2mn), $$
and
\be \label{upperbound} 2^{2\alpha+6}\parallel  M_{Q_{4n}} \left( 4 + (x-1)\right)   \ee
the exponents  in (a) and (b) are optimal.

\end{lemma}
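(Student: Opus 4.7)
The plan is to exploit the self-reciprocal structure of $h(x) := f(x)f(x^{-1}) - x^n g(x)g(x^{-1})$ modulo $x^{2n}-1$, together with the factorization $x^{2n}-1 = (x^n-1)(x^n+1)$, in order to reduce the claimed divisibilities to the dihedral divisibility \eqref{dihedraldiv} (for the ``$-$'' factor) and a parallel argument (for the ``$+$'' factor).

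First, I would verify that $h(x^{-1}) \equiv h(x)$ modulo $x^{2n}-1$, since $x^{-n} \equiv x^n$ in that quotient. Consequently, for each divisor $d \geq 3$ of $2n$, the involution $\zeta \mapsto \zeta^{-1}$ pairs up the primitive $d$th roots of unity without fixed points and $h$ is constant on each such pair, so $\mathrm{Res}(\Phi_d, h) = R_d^2$ where $R_d = N_{\mathbb{Q}(\zeta_d)^+/\mathbb{Q}}(h(\zeta_d)) \in \mathbb{Z}$. This yields the key factorization
$$ M_G(F) = h(1)\,h(-1) \prod_{d \mid 2n,\ d\geq 3} R_d^2, $$
so every $R_d$-contribution to $v_p(M_G(F))$ is automatically even, providing the ``parity bonus'' needed to upgrade cyclic-style bounds.

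Next I would split $M_G(F) = M_1 \cdot M_2$ with $M_1 = \prod_{\zeta^n = 1} h(\zeta)$ and $M_2 = \prod_{\zeta^n = -1} h(\zeta)$. Since $\zeta^n = 1$ eliminates the $x^n$-factor in $h$, we have $M_1 = M_{D_{2n}}(F)$ exactly, and \eqref{dihedraldiv} supplies the required $p$-adic bounds whenever $p \mid M_1$ (namely $p^{2\alpha+1}$ for odd $p$, and $2^2,\,2^4,\,2^{2\alpha+4}$ when $p=2$ and $\alpha=0,\,1,\,\alpha\geq 2$ respectively). The crux is the companion $M_2 = \prod_{\zeta^n=-1}[f(\zeta)f(\zeta^{-1}) + g(\zeta)g(\zeta^{-1})]$, for which I would prove analogous divisibilities by mirroring the dihedral argument: the roots of $x^n+1$ generate cyclotomic fields $\mathbb{Q}(\zeta_{2d})$ coinciding with $\mathbb{Q}(\zeta_d)$ for odd $d$ and having the same odd-$p$ ramification and inertia for even $d$, so the Kummer-style estimates used in \cite{dihedral} carry over. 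A crucial additional observation handles the $p=2$ case: since $x^n - 1 \equiv x^n + 1 \pmod 2$, we have $M_1 \equiv M_2 \pmod 2$, so $2 \mid M_G$ forces both $M_1$ and $M_2$ to be even; combining the dihedral bound on $M_1$ with the analogous bound on $M_2$ then produces the claimed $2^4$ (when $\alpha=0$) and $2^{2\alpha+6}$ (when $\alpha\geq 1$). The hardest step will be proving the ``$+$-dihedral'' divisibility for $M_2$, especially at $p=2$, where one must carefully track how many factors of $2$ come from the real values $h(\pm 1)$ versus the square contributions $R_d^2$.

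Finally I would confirm sharpness by direct computation of the stated examples. For $F = 1 - (1+x^n)(1-x) + y\,\tfrac{p-1}{2}(1+x^n)$, the roots with $\zeta^n = -1$ give $h(\zeta) = 1$ so only $M_1$ matters; at primitive $p^i$th roots one finds $h(\zeta) = -p(p-2) - 2(\zeta+\zeta^{-1}-2)$, and tallying the cyclotomic factors yields precisely $p^{2\alpha+1}\parallel M_G(F)$. For $F = 4+(x-1)$ we have $g = 0$, so $M_G(F) = M_{\mathbb{Z}_{2n}}(x+3)^2 = (3^{2n}-1)^2$, and the lifting-the-exponent formula gives $v_2(3^{2n}-1) = \alpha+3$, hence $v_2(M_G(F)) = 2\alpha+6$. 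The remaining example uses that $\zeta^{2^{\alpha+1}}$ ranges over $n'$th roots of unity (where $n = 2^\alpha n'$, $n'$ odd), together with $\prod_{\eta^{n'}=1}(1+\eta) = 2$ and $h(1) = 4(1+2mn)$, to give $M_G(F) = 2^{2^{\alpha+2}}(1+2mn)$.
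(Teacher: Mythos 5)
Your overall strategy is essentially the paper's: the same decomposition $M_G(F)=\prod_{d\mid 2n}M_d$ with the $M_d$ squares for $d\geq 3$, the same splitting into the $x^n=1$ part (recognized as $M_{D_{2n}}(F)$) and the $x^n=-1$ part, and the same direct computations for the sharpness examples (your verification of \eqref{upperbound} via \eqref{y=0} and \eqref{2example}, and of the odd-$p$ example via $h=-p(p-2)-2(x+x^{-1}-2)$ at the $x^n=1$ roots, matches the paper). Part (i) and the examples are fine in outline.

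The $2$-adic part, however, has genuine gaps. First, in case (ii)(a) your plan is to combine ``the dihedral bound on $M_1$ with the analogous bound on $M_2$,'' i.e.\ $2^2\cdot 2^2=2^4$. But the analogous bound on $M_2$ is false in general: for $n$ odd the $d=2$ component of $M_2$ is $f(-1)^2+g(-1)^2$, and when $f(-1),g(-1)$ are both odd this is $\equiv 2\pmod 4$, so $M_2$ contributes only a single factor of $2$. Your combination then yields only $2^3$. The correct argument (as in the paper) is an uneven split: in that same case $f(1),g(1)$ are both odd, so $f(1)^2-g(1)^2\equiv 0\pmod 8$ and $M_1$ supplies $2^3$, giving $2^3\cdot 2^1=2^4$; in the all-even case one gets $2^2\cdot 2^2$.

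Second, and more seriously, case (ii)(b) with $\alpha=1$ does not follow from your combination at all. The dihedral result for $\alpha=1$ only gives $2^4\parallel M_1$ or $2^6\mid M_1$, and $M_2$ being a product of squares gives $2^2\mid M_2$; the product argument therefore stalls at $2^6\mid M_G(F)$, while the claim is $2^8$. One must separately rule out $2^6\parallel M_G(F)$, i.e.\ the configuration $2^2\parallel M_m, M_{2m}, M_{4m}$. The paper does this with an argument your proposal has no counterpart for: for $m=1$ it appeals to the known description \eqref{Q8} of $\S(Q_8)$, and for odd $m\geq 3$ it writes the partial product as $H(x)=A_0+\sum_{j\geq 1}A_j(x+x^{-1})^j$, deduces from $M_m+M_{2m}\equiv 2A_0\equiv 0\pmod 8$ that $4\mid A_0$, and hence that $16\mid M_{4m}=H(i)^2$. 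Without some such mechanism linking the $2$-adic valuations of $M_m$, $M_{2m}$ and $M_{4m}$, the exponent $2\alpha+6$ is not reached when $\alpha=1$.
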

For $Q_{4p}$, $p$ odd,  we  achieve all odd  multiples of $p^3$, and for $Q_8$ all multiples of $2^8$, but in general  it is not clear whether we can achieve the prime power $p^{1+2\alpha}$ or $2^{2\alpha+6}$ itself. 
Property \eqref{upperbound} is just \eqref{2example} and  \eqref{y=0}.

When $n$ is odd the next lemma, the counterpart to \cite[Lemma 4.2]{dihedral},  shows that we can achieve any integer coprime to $2n$. By
\eqref{mult} and \eqref{minus} it is enough to achieve $p$ or $-p$ for any $p\nmid n$.

\begin{lemma}\label{achieve} Suppose that $n$ is odd  and $p\nmid n$ is an odd prime,  where $p\equiv \delta$ mod $4$ with $\delta=\pm 1$.
Set  $t=(p-\delta)/4$, and
$$ f =\delta + (x^n+1)H(x),\;\;\; g=(x^n+1)H(x), $$
with
$$ H(x)= \left( \frac{ x^{m} +1}{x+1}\right)\left( x^{a_1}+\cdots + x^{a_t}\right), $$
where $m$ is odd with $pm\equiv 1$ mod $n$, and $pa_1,\ldots , pa_t\equiv 1,3,\ldots , (p-3)/2$  mod $n$ if $\delta =1,$
and $0,2,\ldots , (p-3)/2$ mod $n$  if $\delta =-1$, then
$$ M_{Q_{4n}} (f(x)+yg(x)) =\delta p. $$

\end{lemma}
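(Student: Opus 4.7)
The plan is to use the measure formula \eqref{formulaQ} and split the resulting product over $2n$-th roots of unity according to whether $w^n = 1$ or $w^n = -1$. At a $2n$-th root $w$ with $w^n = -1$, the factor $(x^n+1)H(x)$ vanishes, so $f(w) = \delta$, $g(w) = 0$, and the contribution is $\delta^2 = 1$. At an $n$-th root $w$ (i.e.\ $w^n = 1$), we have $w^n + 1 = 2$, so $f(w) = \delta + 2H(w)$, $g(w) = 2H(w)$; expanding $f(w)f(w^{-1}) - w^n g(w)g(w^{-1})$ cancels the pure $H$-terms and leaves $1 + 2\delta(H(w) + H(w^{-1}))$. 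Thus it suffices to show
\[
M_{\mathbb Z_n}\!\left(1 + 2\delta(H(x) + H(x^{-1}))\right) = \delta p.
\]

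Since $\gcd(p,n) = 1$, the substitution $x \mapsto x^p$ permutes the $n$-th roots, so the measure is unchanged if we replace $H(x)$ by $H(x^p)$. At an $n$-th root $w$, the condition $pm \equiv 1 \pmod n$ gives $w^{pm} = w$, hence
\[
H(w^p) = \frac{w+1}{w^p+1} \sum_{j=1}^t w^{c_j}, \qquad c_j := pa_j \bmod n.
\]
Combining with $H(w^{-p}) = \frac{1+w}{1+w^p}\sum_j w^{p-1-c_j}$ produces the exponent multiset $\{c_j\} \cup \{p-1-c_j\}$. The key combinatorial observation is that, at $n$-th roots of unity, this sum equals $w + w^3 + \cdots + w^{p-2}$ when $\delta = 1$ and $1 + w^2 + \cdots + w^{p-1}$ when $\delta = -1$: indeed, for $\delta = 1$ the residues $c_j$ are the odd integers in $[1, (p-3)/2]$ and $p-1-c_j$ runs through the odd integers in $[(p+1)/2, p-2]$, together covering all odd values in $[1, p-2]$; the case $\delta = -1$ is analogous with even residues in $[0, p-1]$. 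The telescoping
\[
(1+w)(w + w^3 + \cdots + w^{p-2}) = w + w^2 + \cdots + w^{p-1} = \tfrac{w^p - 1}{1} \cdot \tfrac{w}{w-1},
\]
together with its $\delta = -1$ analogue, then collapses $F(w) := 1 + 2\delta(H(w^p) + H(w^{-p}))$ to the clean form
\[
F(w) = \delta \cdot \frac{(w+1)(w^p-1)}{(w-1)(w^p+1)}.
\]

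Finally, compute $\prod_{w^n=1} F(w)$ by separating the (removable) singularity at $w = 1$. At $w = 1$, L'Hopital gives the limit $\delta \cdot 2p/2 = \delta p$, matching $F(1) = 1 + 4\delta H(1) = 1 + \delta(p-\delta) = \delta p$. For the remaining product, using $\prod_{w^n=1}(w - c) = (-1)^n(c^n - 1)$ with $n$ odd and the bijection $w \mapsto w^p$ on $n$-th roots, one finds
\[
\prod_{w\neq 1}(w+1) = \prod_{w\neq 1}(w^p+1) = 1, \qquad \prod_{w\neq 1}(w-1) = \prod_{w\neq 1}(w^p-1) = n,
\]
so $\prod_{w \neq 1} F(w) = \delta^{n-1} = 1$, and the total is $\delta p$, as required.

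The main obstacle is the parity/combinatorial verification that $\{c_j\} \cup \{p-1-c_j\}$ is exactly the appropriate odd or even residue class in $[0, p-1]$; this is where the congruence conditions on the $a_j$ and the divisibility $4 \mid p - \delta$ are essential. The subsequent telescoping and cancellation of cyclotomic products are routine once this identification is in hand.
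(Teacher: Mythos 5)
Your argument is correct and follows essentially the same route as the paper's proof: split the product according to $x^n=\pm1$, reduce to the $\mathbb Z_n$-measure of $1+2\delta(H(x)+H(x^{-1}))$, substitute $x\mapsto x^p$, use the congruence conditions on $m$ and the $a_j$ to assemble $\Phi_p$, and kill the remaining cyclotomic product. The only cosmetic difference is that you carry the factor as the rational function $\delta(x+1)(x^p-1)/((x-1)(x^p+1))$ and evaluate the linear-factor products directly, where the paper clears the denominator with $\prod_{x^n=1,\,x\neq1}(x+1)=1$ and invokes the resultant $\prod_{d\mid n,\,d\neq1}\Res(\Phi_d,\Phi_p)=1$.
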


When $2\mid n$  we have additional restrictions on the odd determinants, showing that we can no longer achieve all integers coprime to $2n$ :

\begin{lemma}\label{neven}
Suppose that $G=Q_{4n}$ with $2\mid n$.

 If $2\nmid M_G(F)$ then  $M_G(F)\equiv 1$  or $-3$ mod $8$.

If  $2\parallel n$ and $M_G(F)\equiv -3$ mod 8 then
$M_G(F)=(8m-3)k^2$ for some integer $m$ and positive integer  $k\equiv 3$ mod $4$.
Further we can assume that $\gcd(k,n)=1$ or $M_G(F)=(8m-3)p^4$ with $p\mid n$. In either case if $q\mid \gcd(n,(8m-3))$  is prime then $q^2\mid (8m-3)$. 

If $2\parallel n$ and $M_G(F)\equiv -3$ mod $8$ is of the form $\pm q^{\beta}$ with $q^{\alpha}\parallel n$, $\alpha \geq 1,$ 
then $\beta \geq 4\alpha +3$.
\end{lemma}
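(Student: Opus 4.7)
The plan is to leverage the structural decomposition from the preceding part of Lemma \ref{neven} and combine it with the dicyclic divisibility of Lemma \ref{dicyclicdiv}(i). Assume throughout that $M_G(F) = \pm q^{\beta} \equiv -3 \pmod 8$ with $q^{\alpha}\parallel n$, $\alpha\geq 1$, and $2\parallel n$.

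First I would apply the factorization $M_G(F) = (8m-3)k^2$ with $k\equiv 3\pmod 4$, noting the dichotomy ``$\gcd(k,n)=1$ or $M_G(F) = (8m-3)p^4$ with $p \mid n$''. The branch $\gcd(k,n)=1$ is ruled out immediately: since $q\mid n$ and $M_G(F)=\pm q^{\beta}$, coprimality forces $k=1$, contradicting $k\equiv 3 \pmod 4$. So $M_G(F)=(8m-3)p^4$, and $p\mid M_G(F)$ forces $p=q$, giving $(8m-3) = \pm q^{\beta-4}$.

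A modulo $8$ analysis then pins down the parity of $\beta$: since $(8m-3)\equiv 5\pmod 8$ and $q^2\equiv 1\pmod 8$, $\beta-4$ must be odd (otherwise $\pm q^{\beta-4}\equiv \pm 1\pmod 8$), so $\beta$ is odd with $\beta\geq 5$. The auxiliary clause ``if $q\mid\gcd(n,(8m-3))$ then $q^2\mid(8m-3)$'' from the same paragraph, applied to $q\mid n$ together with $q\mid(8m-3)$, then upgrades this to $\beta-4\geq 2$, hence $\beta\geq 7 = 4\alpha+3$ in the base case $\alpha=1$.

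For $\alpha \geq 2$ I would proceed by induction on $\alpha$, exhibiting the residual $(8m-3)=\pm q^{\beta-4}$ as a determinant $M_{Q_{4n'}}(F')$ for the smaller dicyclic group $Q_{4n'}$, where $n' = n/q$ satisfies $q^{\alpha-1}\parallel n'$ and $2\parallel n'$. Since $q$ is odd the assignment $x\mapsto x'$, $y\mapsto y'$ defines a surjection $\pi\colon Q_{4n}\twoheadrightarrow Q_{4n'}$ with kernel $\langle x^{2n'}\rangle$ of order $q$; pushing $F$ through $\pi$ and identifying the ``extra'' factor $M_{Q_{4n}}(F)/M_{Q_{4n'}}(\pi_* F)$ as $\pm q^4$ would give $(8m-3) = \pm M_{Q_{4n'}}(\pi_*F) \equiv 5\pmod 8$, whereupon the induction hypothesis yields $\beta-4 \geq 4(\alpha-1)+3$, i.e.\ $\beta \geq 4\alpha+3$. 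The main obstacle is confirming that the quotient contribution is exactly $\pm q^4$ (matching the $p^4$ shape) and that $\pi_*F$ inherits the $\equiv -3\pmod 8$ and pure $q$-power properties; this amounts to a careful analysis of the characters of $Q_{4n}$ nontrivial on $\langle x^{2n'}\rangle$, together with a $q$-adic tracking of $\sum_{e\mid 2n} v_q(\Res(h,\Phi_e))$ over the divisors $e\in\{q^{\alpha},2q^{\alpha},4q^{\alpha}\}$ that do \emph{not} descend to $Q_{4n'}$, so as to confirm that their combined contribution is exactly $4$.
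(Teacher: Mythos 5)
There is a genuine gap, and it sits exactly where you flag it. Your inductive step for $\alpha\geq 2$ requires the quotient contribution $M_{Q_{4n}}(F)/M_{Q_{4n'}}(\pi_*F)=\prod_{d}M_d$, the product over those $d\mid 2n$ divisible by $q^{\alpha}$, to be exactly $\pm q^4$; but nothing forces this. Those $d$ are \emph{all} multiples of $q^{\alpha}$ dividing $2n$, not just $q^{\alpha},2q^{\alpha},4q^{\alpha}$, and each such $M_d$ is a positive square power of $q$ that can be arbitrarily large, so the discarded factor is some $q^{2s}$ with $s$ unknown. What the induction actually needs is the lower bound $s\geq 2$, and proving that already requires the congruence $M_{dq^{j}}\equiv M_d^{\phi(q^{j})}\bmod q$ of \eqref{cong} together with the facts $q\mid M_1M_2$ and $q\mid M_4$ --- which is the entire content of the direct argument, so the induction is a detour rather than a proof. (Your base case $\alpha=1$ is fine only because you import the second paragraph of the lemma wholesale; note also that the proposal never proves the first two paragraphs of the statement, which are part of what is being claimed.)

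The paper's proof is direct and uniform in $\alpha$. Write $M_G(F)=\prod_{d\mid 2n}M_d$; the $M_d$ with $d\geq 3$ are squares, and $\pm q^{\beta}\equiv -3\bmod 8$ forces $\beta$ odd (since $q^{2}\equiv 1\bmod 8$), so the odd power of $q$ must enter through $M_1M_2$, say $q\mid M_1$ (the $M_2$ case is symmetric). The congruence \eqref{cong} then gives $q\mid M_{q^{j}}$ for $j=0,\dots,\alpha$, and the $M_{q^{j}}$ with $j\geq 1$ are squares, so $q^{1+2\alpha}\mid M_1M_q\cdots M_{q^{\alpha}}$. Independently, the second paragraph gives $M_4=k^{2}$ with $k\equiv 3\bmod 4$, so $k>1$ is a power of $q$ and $q^{2}\mid M_4$; propagating by \eqref{cong} again, $q^{2}\mid M_{4q^{j}}$ for all $j\leq\alpha$ and $q^{2+2\alpha}\mid M_4M_{4q}\cdots M_{4q^{\alpha}}$. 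Adding the two disjoint chains gives $\beta\geq 4\alpha+3$ in one step. If you want to salvage your induction you must (i) replace ``exactly $\pm q^{4}$'' by ``at least $q^{4}$'' and prove it by the congruence argument just described, and (ii) check that the residual $M_{Q_{4n'}}(\pi_*F)$ is still $\equiv -3\bmod 8$ and still $\pm$ a power of $q$ (both hold, since the discarded factors are squares of powers of $q$, hence $1\bmod 8$); but at that point the direct proof is shorter.
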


\section{Proofs}
We shall need to know the resultant of two cyclotomic polynomials, see \cite{Apostol} or \cite{ELehmer}; if $m>n$ then
$$ |\text{Res}(\Phi_n,\Phi_m)|=\begin{cases}p^{\phi(n)} & \text{ if $m=np^{t}$, }\\
1 & \text{ else.} \end{cases} $$

It will be useful to  split the product over the $2n$th roots of unity in \eqref{formulaQ} into the primitive $d$th roots of unity with $d\mid 2n$:
$$ M_G(F) = \prod_{d\mid 2n} M_d, $$
where
$$ M_d := \prod_{\stackrel{j=1}{(j,d)=1}}^{d} f(w_d^j)f(w_d^{-j})-w_d^{nj} g(w_d^j)g(w_d^{-j}),\;\;\; w_d:=e^{2\pi i/d}. $$
Since we run through complete sets of conjugates the $M_d$ are integers. Moreover, since $f(x)f(x^{-1})-x^ng(x)g(x^{-1})$
is fixed by $x\mapsto x^{-1}$, $x^{2n}=1$, when $d\neq 1,2$ we run through a complete set of conjugates twice and $M_d$
will actually be the square of an integer for $d\geq 3$.

\begin{proof}[Proof of Lemma \ref{cycliclemma}] Suppose that $G=\mathbb Z_n$ and write
$$ M_G(f)=\prod_{d\mid n} U_d(f),\;\; U_d(f)=\text{Res}(\Phi_d,f)\in \mathbb Z. $$
Suppose $p\mid M_G(f)$ then $p\mid U_{mp^j}(f)$ some $p\nmid m$, $0\leq j\leq \alpha$, and since $(1-w_{p^j})\mid p$ we 
have 
$$U_{mp^j}(f)=\prod_{\stackrel{r=1}{\gcd(r,m)=1}}^m \prod_{\stackrel{s=1}{\gcd(s,p)=1}}^{p^j} f(w_{m}^rw_{p^j}^s) \equiv U_m(f)^{\phi(p^j)} \text{ mod } p$$ and $p\mid U_{mp^j}(f)$ all $j=0,\ldots ,\alpha,$ and $p^{\alpha+1}\mid M_G(f)$.

Observe that $F(x)=\prod_{\stackrel{r=1}{(r,m)=1}}^m f(w_m^rx)$ is in $\mathbb Z[x]$ (since, for example,  its coefficients are fixed by  the automorphisms of $\mathbb Q(w_m)$). Hence when $p=2$ and $\alpha\geq 2$ we can write
$$U_m(f)U_{2m}(f)U_{4m}(f)=U_1(F)U_2(F)U_4(F)=M_{\mathbb Z_{4}}(F). $$
From \cite{Norbert2}  we have 
$$\S(\mathbb Z_4)=\{ 2^a c \; :\; \gcd(c,2)=1, \; a=0 \text{ or } a\geq 4\}. $$
 Hence we have
$2^4\mid U_m(f)U_{2m}(f)U_{4m}(f)$ and $2\mid U_{mp^j}$ any $j=3,\ldots ,\alpha$, and $2^{\alpha+2}\mid M_G(F).$

For the examples observe that $U_{d}(p+(x-1))\equiv U_d(x-1) = \Phi_d(1)\not \equiv 0$ mod $p$ unless $d$ is a power of $p$,
while $U_1(p+x-1)=p$,  for the $d=p^j$, $j=1,\ldots ,\alpha$ and $p\geq 3$ and $x$ a primitive $p^j$th root of unity we can write $p+(x-1)=(x-1)v$, $v\equiv 1$ mod $(1-w_{p^j})$ and $U_{p^j}(p+(x-1))=U_{p^j}(x-1) (1+tp)=p(1+tp)$ and $p\parallel U_{p^j}(p+(x-1))$ and $p^{\alpha+1}\parallel M_G(p+(x-1))$. The case $p=2$ and $M_G(4+(x-1))$ is similar,  except that $2^2\parallel U_1(4+(x-1))$. 

\end{proof}

\begin{proof}[Proof of Lemma \ref{dicyclicdiv}]
Observe that  if $d=mp^j$ with $\gcd(m,p)=1$ then the primitive $dth$  roots of unity can be written in the form $w_m^r w_{p^j}^s,$ $r=1,...,m$, $\gcd(r,m)=1$ and $s=1,...,p^j$, $\gcd(p,s)=1$. Notice that  $w_m^r w_{p^j}^s\equiv w_m^r$ mod $(1-w_{p^j})$ where $|1-w_{p^j}|_p=p^{-1/\phi(p^j)}$. Hence we have a mod $(1-w_{p^j})$ congruence relating $M_{mp^j}$ and $M_m$ and, since we are dealing 
with integers, actually a  mod $p$ congruence:
\be \label{cong}  M_{mp^j} \equiv M_m^{\phi(p^j)} \text{ mod } p. \ee

Suppose that $p^{\alpha} \parallel n$  and $p\mid M_G(F)$. Then $p\mid M_{mp^j}$ for some $mp^j\mid 2n$, $\gcd(m,p)=1$
and $0\leq j\leq \alpha$ for $p\geq 3$ and $0\leq j \leq \alpha +1$ for $p=2$. 
By \eqref{cong} we get that $p\mid M_{mp^j}$ for all these $j$ and hence $p^2\mid M_{mp^j}$ for all the $j$ if $m>2$
and for $j\geq 1$ if $m=1$ or $2$ and $p\geq 3$ and $j\geq 2$ if $m=1$ and $p=2$.

Hence for $p$ odd and $\alpha \geq 1$ we get $p\mid M_m, p^2\mid M_{mp},\ldots ,M_{mp^{\alpha}}$ and $p^{1+2\alpha}\mid M_G$, improving to $p^{2+2\alpha}\mid M_G$ except when $m=1$ or $2$.

Suppose that $p=2$ and write $n=2^{\alpha}N$. 

Suppose first that $\alpha=0$. If $m>1$ then $2^2\mid M_{m},M_{2m}$ and $2^4\mid M_G(F)$. If $m=1$ then
$$ M_1=f(1)^2-g(1)^2, \;\; M_2=f(-1)^2+g(-1)^2 $$
where $f(1),g(1),f(-1)$ and $g(-1)$ must have the same parity. If both are odd then $2^3\mid M_1$ and $2\parallel M_2$,
while if both are even $2^2\mid M_1,M_2$. Hence in either case $2^4\mid M_G(F)$.

Suppose that $\alpha \geq 1$. We write 
$$ M_G(F) = AB,\;\; A =\prod_{d\mid n} M_{d},\;\; B =\prod_{d\mid N} M_{d2^{\alpha+1}} $$
where, since $M_{m}$ is in $A$ and $M_{m2^{\alpha+1}}$ is in $B$ both are even, with $2^{2\beta}\parallel  B$ since the $M_{d2^{\alpha+1}}$ are squares.
Now
$$ A=M_{\mathbb Z_{n}} \left(f(x)f(x^{-1}) - g(x)g(x^{-1})\right)  = M_{D_{2n}}(F), $$
and it was shown in  \cite[Lemma 4.4]{dihedral} that even $M_{D_{2n}}(F)$ had $2^4\parallel A$ or $2^6\mid A$ if $\alpha=1$
and $2^{2\alpha+4}\mid A$ if $\alpha \geq 2$, giving us $2^6\parallel AB$ or $2^8\mid AB$ when $\alpha=1$ and (b) 
when $\alpha \geq 2$.
It remains to show that we do not have $2^6\parallel M_G(F)$ when $\alpha=1$. 

If $m=1$ then $2^6\parallel M_1M_2M_4=M_{Q_8}(F)$, but from \eqref{Q8} this can not occur. So suppose that for some odd $m\geq 3$ we have $2^2\parallel M_{m},M_{2m},M_{4m}$.
Write:
\begin{align*}  H(x)=\prod_{\stackrel{j=1}{\gcd(j,m)=1}}^{(m-1)/2}  &  \left( f(w_{m}^j x)f(w_m^{-j}x^{-1}) 
-x^n g(w_m^j x)g(w_m^{-j}x^{-1})\right)   \\
 & \times \left( f(w_{m}^{-j} x)f(w_m^{j}x^{-1})-x^{-n} g(w_m^{-j }x)g(w_m^{j}x^{-1})\right).   \end{align*}
and observe that $M_m=H(1),M_{2m}=H(-1), M_{4m}=H(i)^2. $
Observe that $H(x^{-1})=H(x)$, so $H(x)$ is a sum of terms $a_i(x^i+x^{-i})$ and hence 
$$ H(x)=A_0+\sum_{j=1}^N A_j(x+x^{-1})^j, \;\;\; A_j\in \mathbb Z. $$
So
$$ M_{m} \equiv A_0+2A_1+4A_2 \text{ mod }8,\;\;\;  M_{2m} \equiv A_0-2A_1+4A_2 \text{ mod }8,$$
and if $2^2\parallel M_m,M_{2m}$
$$ 2A_0 \equiv M_{m} +M_{2m } \equiv 0 \text{ mod }8. $$
Hence $A_0\equiv 0$ mod $4$ and $4^2\mid M_{4m}$ and $2^8\mid M_G(F)$.

Suppose that $p$ is odd and $F=f+yg$ with  
$$f(x)=1-(1+x^n)(1-x),\;\;\; g(x)=\left(\frac{p-1}{2}\right) (1+x^n), $$
then for $x^n=-1$ or   $x^n=1$ we have
$$ f(x)f(x^{-1})-x^n g(x)g(x^{-1})=1   \text{ or } -2x^{-1}(x-1)^2+2p-p^2,$$
and 
$$ M_G(F)=\prod_{d\mid n} M_d,\;\; M_d=\text{Res}( -2x^{-1}(x-1)^2+2p-p^2, \Phi_d). $$
Now $M_d \equiv 2^{\phi(d)} \text{Res}(\Phi_1,\Phi_d)^2  \not\equiv 0 \text{ mod }p$ unless $d=1,p,\ldots ,p^{\alpha}$.

Plainly $p\parallel M_1=p(2-p)$. 
Since 
$$p=\prod_{\stackrel{u=1}{\gcd(u,p)=1}}^{p^j} (1-w_{p^j}^u) =(1-w_{p^j})^{\phi (p^j)}A(w_{p^j}), $$ 
for $\phi(p^j)>2$ and $x=w_{p^j}^u$ we have
$$  -2x^{-1}(x-1)^2+2p-p^2=(x-1)^2\ell(x),\;\; \ell(x)\equiv -2 \text{ mod } 1-\omega_{p^j}, $$
and
$$ M_{p^j}=\text{Res}(1-x,\Phi_{p^j})^2 L=p^2L, $$
where
$$ L=\prod_{\stackrel{u=1}{\gcd(u,p)=1}}^{p^j} \ell(\omega_{p^j}^u)\equiv (-2)^{\phi(p^j)} \equiv 1 \text{ mod } 1-\omega_{p^j}. $$
Since it is an integer, $L\equiv 1$ mod $p$.
When $\phi(p^j)=2$, that is $p=3$, $j=1$, one has $M_3=3^2$.
Hence $p^2\parallel M_{p^j}$, $j=1,\ldots ,p^{\alpha}$ and $p^{1+2\alpha} \parallel M_G.$

\end{proof}

\begin{proof}[Proof of Lemma \ref{achieve}]
We set $H(x)= \left( \frac{ x^{m} +1}{x+1}\right)\left( x^{a_1}+\cdots + x^{a_t}\right)$ and 
$$B(x)=f(x)f(x^{-1})-x^ng(x)g(x^{-1}). $$
For the values with $x^n=-1$ we plainly have $B(x)=\delta^2=1$ and when $x^n=1$
$$ B(x)= (\delta +2H(x))(\delta +2H(x^{-1}) -  4H(x)H(x^{-1}) = 1+2\delta (H(x)+H(x^{-1})). $$
Notice that if $x=1$ then  $B(x)=1+4\delta H(1)= 1+4\delta t=\delta p$, and since $2\nmid n$
$$\prod_{x^n=1,x\neq 1} (x+1) = \prod_{d\mid n,d\neq 1} \text{Res}(\Phi_d(x),\Phi_{2}(x))=1, $$
so we have 
$$ M_G(f+yg)=M_{\mathbb Z_{2n}}(B(x)) = M_{\mathbb Z_n}(B(x))=( \delta p) M',$$
where
\begin{align*}
M' & = \prod_{x^n=1,x\neq 1} (x+1)  (1+2\delta (H(x)+H(x^{-1}))\\
 & =  \prod_{x^n=1,x\neq 1} \left( x+1 + 2\delta (x^m+1)(x^{a_1}+\cdots x^{a_t}) + 2\delta (x^{-m}+1) (x^{1-a_1}+ \cdots + x^{1-a_t}) \right). 
\end{align*}
As $p\nmid n$  the values of $x^p$ run through the $n$th  roots of unity as $x$ does and
$$
M' = \prod_{x^n=1,x\neq 1} \left( x^p+1 + 2\delta  (x^{mp}+1)(x^{p a_1}+\cdots x^{p a_t}) + 2\delta (x^{-pm}+1) (x^{p-p a_1}+ \cdots + x^{p - pa_t}) \right). $$
Taking  $mp=1$ mod $n$
$$ M' =\prod_{x^n=1,x\neq 1} (x+1) \left( \frac{x^p+1}{x+1} + 2\delta \left( x^{pa_1}+ x^{p-1-pa_1}+ \cdots +  x^{pa_t}+ x^{p-1-pa_t}\right)\right) $$
When $\delta =1$ taking $pa_1,\ldots ,pa_t \equiv 1,3,\ldots ,(p-3)/2 \text{ mod }n$ gives
\begin{align*}  \frac{x^p+1}{x+1} +  & 2\delta \left( x^{pa_1}+ x^{p-1-pa_1}+ \cdots +  x^{pa_t}+ x^{p-1-pa_t}\right)  \\ & =1+x+\cdots + x^{p-1} 
  =\Phi_p(x),\end{align*}
and when $\delta =-1$ taking $pa_1,\ldots ,pa_t \equiv 0,2,\ldots ,(p-3)/2 \text{ mod }n$ gives $-\Phi_p(x)$.
Since $p\nmid n$ we have
$$\prod_{x^n=1,x\neq 1}\Phi_{p}(x)= \prod_{d\mid n,d\neq 1}\text{Res}\left(\Phi_d(x),\Phi_{p}(x)\right)=1$$ 
and $M'=1$.
\end{proof}

\begin{proof}[Proof of Lemma \ref{neven}] Suppose that $M_G(F)$ is odd. We write $M_G(F)=\prod_{d\mid 2n}M_d$. 
Then, since the $M_d$ are odd squares for $d>2$, and so $1$ mod $8$,  we have $M_G(F)\equiv M_1M_2$ mod $8$ where
$M_1=f(1)^2-g(1)^2$, $M_2=f(-1)^2-g(-1)^2$.  Since $M_1$ is odd the $f(1)$, $g(1)$ have opposite parity. Suppose that
$f(1)$ is odd and $g(1)$ even (else switch $f$ and $g$). If $2\parallel  g(1), g(-1)$ then $M_1,M_2\equiv 1-4=-3$ mod $8$ and if $4\mid g(1), g(-1)$ then $M_1,M_2\equiv 1$ mod $8$,  and in both cases $M_G(F)\equiv 1$ mod $8$. If $4\mid g(1)$ and $2\parallel g(-1)$ (or vice versa) then $M_1M_2\equiv -3$ mod $8$ and  $M_G(F)\equiv -3$ mod $8$.

Suppose that $2\parallel n$ and  $M_G(F)\equiv -3$ mod $8$ then we can write
$M_G(F)=(8m-3)M_4$ where $M_4=k^2$, $k=|f(i)|^2+|g(i)|^2,$
where from above we can assume that $f(1)$ is odd, $4\mid g(1)$, $2\parallel g(-1)$ (or vice versa).
Now $|f(i)|^2\equiv f(1)^2$ mod $2$ is odd and of the form $a^2+b^2$ so must be $1$ mod $4$. Separating the monomials into the exponents mod $4$ we have
$g(1)=a_0+a_1+a_2+a_3$, $g(-1)=a_0-a_1+a_2-a_3$, $|g(i)|^2=(a_0-a_2)^2+(a_1-a_3)^2$.
Since $4\mid g(1)$, $2\parallel g(-1)$ (or vice versa) we have $a_0+a_2=\frac{1}{2}(g(1)+g(-1))$, $a_1+a_3=\frac{1}{2}(g(1)-g(-1))$ both odd. So $a_0-a_2$ and $a_1-a_3$ are both odd and $|g(i)|^2\equiv 2$ mod $8$ and $k\equiv 3$ mod $4$.

Now if $p\mid k$ and $p\mid n$ then $p^2\mid M_4, M_{4p}$ and  so $p^4\mid M_4M_{4p}$. In either case if $q\mid (8m-3)$ and $n$, then
either $q\mid M_1$ or $M_2$ and $q^3\mid M_1M_{q}$ or $M_2 M_{2q}$ or the extra $q$ came from a square  $M_d$ with $d>2$ so we must have at least two extra $q$'s.

Suppose $M_G(F)=\pm q^{\beta}\equiv -3$ mod 8, with $q^{\alpha}\parallel n$, $\alpha\geq 1$. Since $\beta$ is odd we must have $q\mid M_1M_2$ and  so $q^{1+2\alpha} \mid M_1 M_q \cdots M_{q^{\alpha}}$  or $M_2 M_{2q} \cdots M_{2q^{\alpha}}$ in addition to the $q^{2+2\alpha} \mid M_4 M_{4q} \cdots M_{4q^{\alpha}}$.
\end{proof}

\begin{proof}[Proof of Theorem \ref{odd}] Suppose that $n$ is odd. From Lemma \ref{dicyclicdiv} we can achieve 16
and from Lemma \ref{achieve} achieve the smallest odd prime $p\nmid n$. The minimum of these is the value claimed for $\lambda(G)$.
By Lemma \ref{dicyclicdiv} an even determinant must be a multiple of 16 and a  value containing a prime $p\mid n$ must be divisible by  $p^3$  (and so be at least  $27$).  Hence we can't beat 16 or the smallest odd prime $p\nmid n$.
\end{proof}

\begin{proof}[Proof of Theorem \ref{Q4p}] From Lemmas \ref{dicyclicdiv} we know that the determinants  must be of the form $2^kp^{\ell} m$, $\gcd(m,2p)=1$, with $k=0$ or $k\geq 4$ and $\ell=0$ or $\ell\geq 3$.
By Lemma \ref{achieve} we can obtain 
all the $m$ with $\gcd(m,2p)=1$, so  by multiplication it  will be enough to achieve the appropriate $2^kp^{\ell}$.

We get  the even  powers $2^k$, $k\geq 4$,  from $g(x)=0$ and
$$
f(x) =x^2+1  \Rightarrow M=2^4,\;\;\; 
f(x)  =x^2+1+(x^p+1)x  \Rightarrow M=2^6,
$$
and the odd powers $k\geq 7$  from $g(x)=(x^p+1)$ and
\begin{align*}
f(x) & =x^4 +1 + (x^p+1)(x^2+x)\;\;\; 
  \Rightarrow  \;\;\; M=2^7,\\
f(x) & =(x^4+1)(x^2+1)+x^2(x^p+1)
\;\;\; \Rightarrow \;\;\;  M=2^9,
\end{align*}
where to see that the $p$th roots give 1 it may be useful to note that
\begin{align*}
\left( x^4+1+2(x^2+x)\right)\left( x^{-4}+1+2(x^{-2}+x^{-1})\right) -4 & =x^{-4}(x+1)^2(x^2+1)^3,\\
\left( (x^4+1)(x^2+1)+2x^2\right)\left( (x^{-4}+1)(x^{-2}+1)+2x^{-2}\right) -4 & =x^{-6}(x^4+1)(x^2+1)^4.
\end{align*}

For the powers of $p$ we write $p=4b+\delta$, $\delta=\pm 1$, $a=2b+\delta$. Then
$$ f(x)=\frac{(x^a-1)}{(x-1)} +mh(x),\; g(x) =(x^p+1) \frac{   (x^b-1)}{(x-1)} +mh(x)\;\;\; \Rightarrow M=\delta p^3 (1+4m), $$
where as usual $h(x)=(x^{2p}-1)/(x-1)$,
giving $\pm p^{\ell}$ for  all the  $\ell\geq 3$ with  suitable choices of $m$. To see that the $p$th roots give $p^2$  observe that $p-a=2b$ and
$$ (x^{p-a}-1)(x^{-(p-a)}-1) -4(x^b-1)(x^{-b}-1) =- (x^b-1)^2(x^{-b}-1)^2.$$

We  get the $2^5 p^{\ell}$ with $\ell=4$ or $\ell \geq 6$ using $p^3$ and  $M=-2^5 p^{2t+4}$, $t\geq 0,$ from
$$f(x)=1-x^2 +2\Phi_p(x^2)^{t+1}-p^t h(x),\;
g(x)=(x^p+1) +2\Phi_p(x^2)^{t+1}-p^t h(x). $$

Finally, suppose that we have a determinant $M=2^5m$, or when $p=3$ mod 4  an $M=2^5p^3m$ or $2^5p^5m$, with $\gcd(m,2p)=1$ 
and  $1\leq |m|<\frac{1}{2}(p^2+1)$.

We write $M=M_1M_2M_pM_{2p}$ where
\begin{align*} M_1=f(1)^2-g(1)^2, \;\;&  M_2=f(-1)^2+g(-1)^2,\;\; \\
M_p=\prod_{j=1}^{p-1} |f(\omega^j)|^2-|g(\omega^j)|^2,\;\; &
 M_{2p}=\prod_{j=1}^{p-1} |f(-\omega^j)|^2+|g(-\omega^j)|^2,\;\; \omega:=e^{2\pi i/p}. 
\end{align*}
Since $M_{p}$, $M_{2p}$ are squares we must have $M_1M_2$ even. Thus $f(1)$, $g(1)$ have the same parity and  $2^4\mid M_1M_2$ and $M_p$, $M_{2p}$ are odd. Likewise when $p\equiv 3$ mod 4 we 
 know that a sum of two squares must be divisible by an even power of  $p$ and so the multiples of $p^3$ and $p^5$ 
must have $p\mid M_1$, $p^2\mid  M_{p}$ and $p\nmid M_2M_{2p}$.
Now $M_{2p}\equiv M_{2}^{p-1} \equiv 1$ mod $p$ and so $M_{2p}=1$, else $m$ is divisible by the square of an odd integer
$\equiv \pm 1$ mod $p$ and $|m|\geq (2p-1)^2$. But $M_{2p} \geq \prod_{j=1}^{p-1} |f(-\omega^j)|^2+ \prod_{j=1}^{p-1} |g(-\omega^j)|^2$, so one of these integers must be zero, say $g(-\omega)=0$. Hence $g(x)=\Phi_p(-x)g_1(x)$.
This gives $g(-1)=pg_1(-1)$ and hence $g_1(-1)=0$, otherwise $m$ has a factor of size at least $\frac{1}{2}(p^2+1)$.
Hence $M_2=f(-1)^2$ is divisible by an even power $2^{2t}$, $t\geq 1$. But  $g(1)$, $f(1)$  both even forces $2^2\parallel M_1$ or $2^4\mid M_1$,  contradicting $2^5\parallel M_1M_2$.

We can though get determinants of this form with $m=\frac{1}{2}(p^2+1)$:
\begin{align*}
f(x) & = 1+x^2, \; g(x)=(x-1)\Phi_p(x^2) \;\;\; \Rightarrow \;\;\; M=\frac{1}{2}(p^2+1)\:2^5,\\
f(x)&=-1+\mu h(x),\; g(x)=\Phi_p(-x)+\mu h(x) \;\;\; \Rightarrow \;\;\; M=- \frac{1}{2}(p^2+1)\:2^4 p^3 \mu,
\end{align*}
on observing that $1-\Phi_p(-\omega)\Phi_p(-\omega^{-1})=1-\frac{4}{(1+\omega)(1+\omega^{-1})}=\frac{(1-\omega)^2}{(1+\omega)^{2}}.$

When $p=1\mod 4$ we can write $2p=A^2+B^2$ and
$$ f(x)=(1+x)+A(x^p-1)\Phi_p(x^2), \; g(x)=B(x^p-1)\Phi_p(x^2)\;\;\; \Rightarrow \;\;\; M=2^5p^5. $$

For $p=5$ we also  get the missing values $2^5p^3$.
$$ f(x) =1-x+x^2+(1+x^p)x, \; g(x)=1+(1+x^p)(x+x^2)\;\;  \Rightarrow \;\; M=-2^5 p^3. \qedhere $$

\end{proof}

\end{document}